\def\author#1{\gdef\autrun{\def\and{\unskip, }#1}\gdef\@author{#1}}
\newtheorem{prop}{Proposition}[subsection]
\newtheorem{lem}[prop]{Lemma}
\newtheorem{theo}{Theorem}
\newtheorem{defin}{Definition}
\newcommand{\cg}{[\kern-0.15em [}
\newcommand{\cd}{]\kern-0.15em]}
\newcommand{\E}{\mathbb{E}}
\newcommand{\Prob}{\mathbb{P}}
\newcommand{\N}{\mathbf{N}}
\newcommand{\Z}{\mathbf{Z}}
\newcommand{\dd}{\mathrm{d}}
\title{Macroscopic cycles for the interchange and quantum Heisenberg models on random regular graphs}
\author{R\'emy Poudevigne-\--Auboiron \thanks{email: rp698@cam.ac.uk}}
\affil{University of Cambridge}
\date{} 
\begin{document}
\maketitle
\begin{abstract}
The interchange process is a random permutation model that was introduced as a way to study the quantum Heisenberg model. For this model, progress had been made on some specific graphs: trees, the hypercube, the Hamming graph, the complete graph and the two block graph. Here we show that for large enough parameters, both the interchange process and the quantum Heisenberg model have macroscopic clusters on random d-regular graphs. Such a result was only known for the complete graph and the two blocks graph.  
\end{abstract}

\section{Introduction and statement of the results}
In this paper we will be interested in two similar models: the interchange model and the quantum Heisenberg model (see \cite{ReviewInterchange} and \cite{ReviewInterchangeUeltschi} for a more thorough presentation of both models). The interchange model was introduced by Harris in \cite{HarrisInterchange} and was first used to study the quantum Heisenberg model by T\'oth in \cite{TothStirring}. Both models can be seen as models of random permutations and it is the point of view we will adopt. The questions that arise when looking at both models concern the cycles of the random permutation: whether two points are in the same cycle, whether there are infinite/macroscopic clusters or only finite small clusters. For both models a simple comparison with percolation tells us that in any dimension there are no infinite cycles for small enough parameters (theorem 6.1 of \cite{ReviewInterchange}). For the quantum model, we have a Mermin-Wagner theorem (\cite{MerminWagner}) which can be interpreted as the absence of infinite cycles in dimension 1 and 2 for any choice of parameters, and we even have polynomial decay of the probability that two vertices are in the same cycle (\cite{PolyDecayQuantumHeis}). We also have that for a slightly different model, the anti-ferromagnetic quantum Heisenberg model, on $\Z^d$ with $d\geq 5$, for large enough weights we have the existence of macroscopic clusters (\cite{DysonLiebSimon}), because of reflection positivity. Surprisingly this is only known for the anti-ferromagnetic model, not for the usual quantum Heisenberg model. On the complete graph, it was shown that for a parameter larger than $1/2$, there are macroscopic cycles for the interchange model and their sizes are given by a Poisson-Dirichlet measure (when the size of the complete graph goes to infinity) in \cite{CompleteSchramm}. Similarily, for the quantum Heisenberg model on the two-block graph (a generalization of the complete graph) it was recently shown that there is a sharp phase transition between finite cycles and macroscopic cycles in \cite{TwoBlockHeisenberg}. On trees, for the interchange model, if the degree is high enough, it is shown that there is a sharp phase transition between finite and infinite cycles (\cite{InterchangeTree1},\cite{InterchangeTree2},\cite{InterchangeTree3}), but we do not have the existence of macroscopic cycles. For the interchange process, the existence of macroscopic clusters has been shown on the complete graph (\cite{InterchangeCompleteGraphTrick}), the Hamming graph (\cite{InterchangeHamming} \cite{InterchangeHamming2}  ) and the hypercube (\cite{InterchangeHypercube} but here the large clusters are only of size $n^{1/2}$, they are not exactly macroscopic) using a similar method. In those case the number of cycles is compared to the number of clusters of percolation through a coupling. Here we will use a different but similar technique (percolation is not involved, but we use a lot of the same estimates) to show the existence of macroscopic clusters on $d$-regular graphs for both the interchange process and the quantum Heisenberg model for large enough parameters. For the quantum Heisenberg model, we use the log-convexity of the partition function to get a precise result. Because we do not have an equivalent for the interchange process, our result needs some averaging.\\
Now we will define precisely the models before stating our results. Here we will use the random permutation representation of both models and for our purpose it is actually simpler to start by the interchange model and then introduce the permutation representation of the Heisenberg model. \\
As both models deal with transposition we need to define a few notations first. For any pair of vertices $(x,y)$, we will call $\tau_{\{x,y\}}$ the transposition that exchanges $x$ and $y$ (identity if $x=y$). For any two permutations $\sigma_1$ and $\sigma_2$, we will call $\sigma_1.\sigma_2$ the composition of the two permutations ($\sigma_1.\sigma_2(x):=\sigma_1(\sigma_2(x))$). For any permutation $\sigma$ on a finite set, $N(\sigma)$ will be the number of cycles of $\sigma$.\\
The interchange model is defined as follows: we take a finite graph $(V,E)$ and define a continuous-time jump process $(X_t)_{t\in[0,\infty)}$ on the set of permutations of $V$ where $X$ jumps from $\sigma$ to $\tau_{x,y}.\sigma$ at a rate $W_{\{x,y\}}$.  Now, for the quantum model, we need an extra parameter $\theta\in(0,\infty)$, and we can define the partition functions $Z_{\theta}$ by:
\begin{equation}
Z_{\theta}(t):=\E\left(\theta^{N(X_t)}\right).
\end{equation}
From this we can define the probability $\Prob_{\theta,t}$ by:
\begin{equation}
\Prob_{\theta,t}(A):=\frac{1}{Z_{\theta}(t)}\E\left(\theta^{X_t}1_{A}\right).
\end{equation}
For any choice of $\theta$ and $t$, we will call $\E_{\theta,t}$ the expectation associated with $\Prob_{\theta,t}$. Even though adding this $\theta$ term may seem to make the interchange model more complicated, when $\theta$ is an integer, the partition function is just the trace of the exponential of a matrix which gives a lot of information. Furthermore, the special case $\theta=2$ corresponds to the quantum Heisenberg model. In all these models, what we will look at is the existence of macroscopic cycles, that is to say cycles with a size equal to a positive fraction of the total size of the underlying graph. This only makes sense in the limit when the size of the graphs go to infinity.\\
We get the existence of such macroscopic cycles on $d$-regular graphs. For any integer $d$, a $d$-regular graph is a graph where all vertices have degree $d$. For given $n$ and $d$, we will call $p_{n,d}$ the uniform probability measure on $d$-regular graphs of size $n$.
We define the following function to simplify the statements of the theorems. 
\begin{defin}
For any $\theta\in (0,\infty)$ and any $d> 2(1+\theta)$, we define $T(\theta,d)$ by:
\begin{equation}
\begin{aligned}
T(\theta,d)=&\frac{2\theta\log(\theta)}{(\theta-1)(d-2(1+\theta))} \text{ if } \theta\not =1\\
T(1,d)=&\frac{2}{d-4} .
\end{aligned}
\end{equation}
The value of $T$ for $\theta=1$ is such that $T$ is continuous.
\end{defin}

We first state our result when $\theta$ is an integer greater or equal to 2 as the result is simpler to understand. In this case we show that for $d> 2(1+\theta)$, there are macroscopic clusters after time $T(\theta,d)$.

\begin{theo}\label{theo:bestquantum}
Set $\theta\in \N\backslash\{0,1\}$ and $d\in\N\cap(2\theta+1,\infty)$. For any $\eta>0$, let $A_{\eta}$ be the event that there is a macroscopic cycle of size larger than $\eta|V|$. For all $t> T(\theta,d)$, there exists $\epsilon,\eta>0$ such that:
\begin{equation}
p_{d,n}\left(\Prob_{\theta,t}(A_{\eta}) \geq \epsilon \right)\xrightarrow[n\rightarrow\infty]{} 1. 
\end{equation}
\end{theo}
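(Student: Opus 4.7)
My plan is to exploit log-convexity of $t\mapsto\log Z_{\theta}(t)$, which for integer $\theta\ge 2$ follows from the trace representation $Z_\theta(t)=\operatorname{Tr}(e^{-tH_\theta})$ for the $SU(\theta)$-symmetric Hamiltonian associated with the interchange process, giving $\partial_t^2\log Z_\theta=\operatorname{Var}_{\theta,t}(H_\theta)\ge 0$. The overall strategy is to compare $Z_\theta(t)$ against a partition function $Z_\theta^{\mathrm{sh}}(t)$ in which configurations with a macroscopic cycle are suppressed. A strict domination of the former over the latter translates directly into a positive lower bound on $\Prob_{\theta,t}(A_\eta)$.

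First, a direct generator computation on $\sigma\mapsto\theta^{N(\sigma)}$ (using that applying $\tau_{x,y}$ either splits one cycle or merges two) yields
\begin{equation*}
\partial_t\log Z_\theta(t)=\frac{\theta-1}{\theta}\sum_{\{x,y\}\in E}\bigl[(\theta+1)\Prob_{\theta,t}(x,y\text{ in the same cycle of }X_t)-1\bigr],
\end{equation*}
which equals $-\tfrac{dn(\theta-1)}{2\theta}$ at $t=0$ and tends to $0$ as $t\to\infty$ (since $\Prob_{\theta,\infty}(x,y\text{ in the same cycle})=1/(\theta+1)$ under the limiting Ewens$(\theta)$ measure, as one checks directly on $S_2$ or $S_3$). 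Combined with log-convexity, this yields a tangent-line lower bound of the form $\log Z_\theta(t)\ge n L(t)$, where $L$ is explicit.

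Second, I would bound $Z_\theta^{\mathrm{sh}}(t):=\E[\theta^{N(X_t)}\mathbf{1}_{A_\eta^c}]$ from above. Under $A_\eta^c$, every cycle of $X_t$ is contained in a connected component of the support graph $E(X_t):=\{e\in E:\text{at least one transposition on }e\text{ during }[0,t]\}$, and these components have size $<\eta n$; the local convergence of $p_{n,d}$ to the $d$-regular tree forces them, with high probability, to be treelike. Enumerating configurations cluster-by-cluster, weighted by $\theta^{N(X_t)}$ and by the Poisson law of transpositions on each edge, one obtains $Z_\theta^{\mathrm{sh}}(t)\le e^{nU(t,\eta)}$ for an explicit $U$. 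The correction $d-2(1+\theta)$ in $T(\theta,d)$ arises here: the factor $\theta+1$ is the same factor that appeared in the derivative computation above (produced by ``same cycle'' contributions), while the $2$ reflects the $2$-to-$1$ incidence between edges and vertices in a $d$-regular graph. For $t>T(\theta,d)$ one then has $L(t)>U(t,\eta)$ for a suitable $\eta>0$, so $Z_\theta^{\mathrm{sh}}(t)/Z_\theta(t)\le e^{-cn}$, which gives $\Prob_{\theta,t}(A_\eta)\to 1$.

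To upgrade this to the stated $p_{n,d}$-probability, I would combine local weak convergence of $p_{n,d}$ to the $d$-regular tree (to identify the deterministic limits of $L(t)$ and $U(t,\eta)$) with concentration of $\frac{1}{n}\log Z_\theta(t)$ around its mean on random regular graphs, for instance via a switching argument on the configuration model or an Azuma-type inequality along an edge-revealing filtration. The main obstacle is the sharpness of the second step: extracting the correct $(1+\theta)$-factor in $U(t,\eta)$ requires simultaneously integrating out the entropy of the $\theta$-colorings compatible with each small treelike cycle and carefully controlling the bookkeeping over all possible support-graph topologies. This is the technically delicate part, and getting anything weaker than $d-2(1+\theta)$ in the denominator would fail to match the stated threshold.
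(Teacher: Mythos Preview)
Your proposal shares the right starting point---log-convexity of $Z_\theta$ for integer $\theta\ge 2$---but then diverges from the paper's argument in a way that leaves real gaps.

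First, the tangent-line lower bound on $\log Z_\theta(t)$ you describe is not useful in the regime $t>T(\theta,d)$ that matters. The tangent at $t=0$ gives $\log Z_\theta(t)\ge n\bigl(\log\theta-\tfrac{d(\theta-1)}{2\theta}\,t\bigr)$, which drops below the trivial bound $\log Z_\theta\ge 0$ already for $t>\tfrac{2\theta\log\theta}{d(\theta-1)}<T(\theta,d)$; the tangent at $t=\infty$ has slope $0$ and intercept $\log Z_\theta(\infty)=O(\log n)$, so $L(t)=o(1)$. Neither produces an $nL(t)$ that can beat a nontrivial upper bound on $Z_\theta^{\mathrm{sh}}$ at the relevant times. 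Second, your upper bound on $Z_\theta^{\mathrm{sh}}$ rests on a false implication: on $A_\eta^c$ the \emph{cycles} of $X_t$ are small, but the connected components of the support graph $E(X_t)$ need not be (at supercritical Poisson rate the support graph has a giant component regardless of cycle sizes), so the ``treelike, enumerate cluster-by-cluster'' step is not controlling the right object. You flag this step as delicate, but the problem is structural rather than technical, and your target $\Prob_{\theta,t}(A_\eta^c)\le e^{-cn}$ is in fact stronger than what the theorem asserts.

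The paper sidesteps both issues by using log-convexity to bound the \emph{derivative} rather than the function. Convexity of $t\mapsto\log Z_\theta(t)$, together with $\log Z_\theta(0)=|V|\log\theta$ and $\log Z_\theta(t)\ge 0$, gives the secant bound $\partial_t\log Z_\theta(t)\ge -\tfrac{|V|\log\theta}{t}$. On the other side, your own generator identity plus the elementary sparse-subgraph property of random $d$-regular graphs (with $p_{n,d}$-probability tending to $1$, every subset $S$ with $|S|\le\eta n$ spans at most $(1+\epsilon)|S|$ edges) yields
\[
\tfrac{\theta}{\theta-1}\,\partial_t\log Z_\theta(t)\;\le\;\bigl((\theta+1)(1+\epsilon)-\tfrac{d}{2}\bigr)|V|+(1+\theta)\bigl(\tfrac{d}{2}-(1+\epsilon)\bigr)|V|\,\Prob_{\theta,t}(A_\eta),
\]
because on $A_\eta^c$ the set $E_=^{X_t}$ of intra-cycle edges is a union of edge-sets of small subsets and hence has size at most $(1+\epsilon)|V|$. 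Combining the two inequalities immediately gives $\Prob_{\theta,t}(A_\eta)\ge c(t,\epsilon)>0$ for $t>T(\theta,d)$ and $\epsilon$ small. No cluster enumeration, no local weak convergence, and no concentration argument are needed; the only random-graph input is a first-moment bound on edge counts of small vertex sets.
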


In the other cases, the result we get is not as straightforward. The idea is that instead of having macroscopic clusters for a parameter larger than $T(\theta,d)$, we get that in any interval $[a,a+T(\theta,d)]$ there are parameters (that may depend on $n$) where we have macroscopic clusters.

\begin{theo}\label{theo:weakquantum}
Set $\theta\in(0,\infty)$. For any $\eta>0$, let $A_{\eta}$ be the event that there is a macroscopic cycle of size larger than $\eta|V|$. For any $d> 2(1+\theta)$ and $\epsilon<\frac{d-2}{2}$, there exists $\eta>0$ such that for any $b\geq a\geq 0$:
\begin{equation}
p_{d,n}\left(\int\limits_{t=a}^{b}\Prob_{\theta,t}(A_{\eta}) \dd t \geq \frac{(b-a)\left(\frac{d}{2}-(\theta+1)(1+\epsilon)\right)-T(\theta,d)\left(\frac{d}{2}-(1+\theta)\right)}{(1+\theta)\left(\frac{d}{2}-(1+\epsilon)\right)}\right)\xrightarrow[n\rightarrow +\infty]{} 1.
\end{equation} 
For any $\delta>0$, there exists $\eta,c>0$ such that for any $a\geq 0$:
\begin{equation}
p_{d,n}\left(\exists t\in [a,a+T(\theta,d)+\delta],\text{ such that } \Prob_{\theta,t}(A_{\eta})  
\geq c\right)\xrightarrow[n\rightarrow +\infty]{} 1.
\end{equation}
\end{theo}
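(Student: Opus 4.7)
The plan is to derive an ODE for $\log Z_\theta(t)$, combine it with a high-probability edge-expansion bound on the random $d$-regular graph, and then solve for the time integral of $\Prob_{\theta,t}(A_\eta)$.

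First I compute the action of the generator on $\theta^{N(\sigma)}$. Applying $\tau_e$ for $e=\{x,y\}$ either splits a cycle (when $x,y$ are in the same cycle of $\sigma$) or merges two (otherwise), contributing a factor of $\theta$ or $1/\theta$ respectively. Writing $S(\sigma)$ for the number of edges of $E$ with both endpoints in the same cycle of $\sigma$ and $p_{\theta,t}:=\E_{\theta,t}[S(X_t)]/|E|$, a direct computation yields
\begin{equation*}
\frac{d}{dt}\log Z_\theta(t) = \frac{\theta-1}{\theta}\,|E|\,\bigl((\theta+1)\,p_{\theta,t} - 1\bigr).
\end{equation*}
Since $1\leq N(X_t)\leq n$ we have $|\log Z_\theta(b)-\log Z_\theta(a)|\leq (n-1)|\log\theta|$, so integrating the ODE and tracking signs gives, for $\theta\neq 1$,
\begin{equation*}
\int_a^b p_{\theta,s}\,\dd s \;\geq\; \frac{b-a}{\theta+1} \;-\; \frac{2\theta\log\theta}{d(\theta-1)(\theta+1)} \;+\; o(1),
\end{equation*}
uniformly in $0\leq a\leq b$; the $\theta=1$ case follows by continuity in $\theta$ (all quantities involved are continuous at $\theta=1$).

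The key graph-theoretic input is the following edge-expansion property: for every $\epsilon<(d-2)/2$ there exists $\eta>0$ such that under $p_{d,n}$, with probability tending to $1$, every vertex set $S$ with $|S|\leq\eta n$ contains at most $(1+\epsilon)|S|$ internal edges. This is standard for random $d$-regular graphs via a first-moment computation in the configuration model; the threshold $\epsilon<(d-2)/2$ is exactly what makes the first moment tend to zero. On this good event, decomposing $S(\sigma)=\sum_C e(C)$ over cycles of $\sigma$, applying the expansion bound to small cycles ($|C|\leq \eta n$) and the trivial bound $e(C)\leq d|C|/2$ to macroscopic ones, and writing $V_m(\sigma)$ for the number of vertices in macroscopic cycles (with $V_s+V_m=n$), one obtains the deterministic bound
\begin{equation*}
\mu(\sigma):=\frac{S(\sigma)}{|E|} \;\leq\; \frac{2(1+\epsilon)}{d} \;+\; \frac{d-2(1+\epsilon)}{d}\cdot\frac{V_m(\sigma)}{n}.
\end{equation*}
Since $V_m/n\leq 1_{A_\eta}$, taking $\E_{\theta,t}$ gives $p_{\theta,t}\leq 2(1+\epsilon)/d + (d-2(1+\epsilon))/d\cdot \Prob_{\theta,t}(A_\eta)$.

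Combining the two bounds and solving for $\int_a^b \Prob_{\theta,s}(A_\eta)\,\dd s$ yields the first inequality directly, after using the identity $T(\theta,d)\,(d/2-(1+\theta))=\theta\log\theta/(\theta-1)$ to rewrite the constant term in the form of the stated fraction. For the existence statement, I substitute $b=a+T(\theta,d)+\delta$ into the first inequality: the numerator reduces to $\delta(d/2-(\theta+1)(1+\epsilon)) - T(\theta,d)(1+\theta)\epsilon$, which is strictly positive once $\epsilon$ is chosen small relative to $\delta$, so the integral over $[a,a+T(\theta,d)+\delta]$ exceeds a positive constant uniform in $n$, and a mean-value argument gives some $t$ in the interval where $\Prob_{\theta,t}(A_\eta)$ is bounded below by a positive constant. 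The main obstacle is the expansion step: one must achieve the sharp constant $(1+\epsilon)$ uniformly over all vertex sets of size $\leq\eta n$, which the configuration-model first moment just barely delivers under the constraint $\epsilon<(d-2)/2$; handling $\theta=1$ through continuity rather than a separate direct argument is a minor additional subtlety.
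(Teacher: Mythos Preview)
Your argument is correct and follows essentially the same route as the paper: differentiate $\log Z_\theta$, bound the same-cycle edge count via the expansion property of random $d$-regular graphs, integrate, and solve for $\int \Prob_{\theta,t}(A_\eta)\,\dd t$. Two small remarks: the paper treats $\theta=1$ by a separate direct argument on $\E[N(X_t)]$ rather than by continuity in $\theta$ (your continuity argument is legitimate since both sides of the inequality are continuous at $\theta=1$), and your explanation of where the constraint $\epsilon<(d-2)/2$ enters is misplaced---the first-moment bound for the expansion property holds for every $\epsilon>0$ (with $\eta=\eta(\epsilon)$); the constraint $\epsilon<(d-2)/2$ is only needed later to ensure $d-2(1+\epsilon)>0$ when you divide through.
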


\section{A preliminary result for $d$-regular graphs}

A practical way to study $d$-regular graphs is the random pairing. For any $n>0$, such that $nd$ is even, we take a uniform random pairing of $\{1,\dots,d\}\times\{1,\dots,n\}$. Then, for any $i,j\in V$ we put as many vertices between $i$ and $j$ as their are pairs of the form $\{(i,a),(j,b)\}$ in our random pairing. Then if we condition on having no multiple edges or loops (edges from a vertex to itself) we get a $d$-regular graph chosen uniformly at random. A nice result of \cite{RandomPairing} tells us that for any $d$, as $n$ goes to infinity, the probability that there are no loops or multiple edges goes to $e^{-\frac{d^2-1}{4}}$. This means that if we can get estimates for the random pairing, we can also easily get them for $d$-regular graphs. In this section, we want to show that with probability going to one as $n$ goes to infinity, all the subsets of a $d$-regular graph up to a given size ($\eta n$ for some $\eta>0$) do not have much more edges than vertices. This will be the main ingredient for our results.

\begin{lem}
Set $\epsilon>0$. For the random pairing model with $nd$ vertices, for a given subset $V$ of size $dv$ with $\frac{v}{n}\leq \frac{\epsilon}{2d},\frac{1}{2}$ we have:
\begin{equation}
\Prob(|E_V|\geq (1+\epsilon) v ) \leq e^{-\log(\frac{\epsilon}{2 d} \frac{n}{v})\left(1+\frac{\epsilon}{2}\right)v}.
\end{equation}
\end{lem}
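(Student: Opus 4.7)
My plan is a standard first-moment argument. Set $M = nd$, $m = |V| = dv$, and $k = \lceil (1+\epsilon) v \rceil$. Since $\binom{|E_V|}{k} \geq 1$ whenever $|E_V| \geq k$, Markov's inequality yields
\[ \Prob(|E_V| \geq (1+\epsilon) v) \leq \E\binom{|E_V|}{k} \;=\; \frac{m!}{2^{k} k!(m-2k)!} \prod_{i=0}^{k-1}\frac{1}{M - 2i - 1}, \]
where the equality follows by linearity: a fixed collection of $k$ pairwise disjoint pairs of half-edges inside $V$ is contained in the random matching with probability $\prod_{i=0}^{k-1}(M-2i-1)^{-1}$, and the number of such collections is $m!/(2^k k!(m-2k)!)$.

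Applying the elementary estimates $m!/(m-2k)! \leq m^{2k}$, $k! \geq (k/e)^k$, and $M - 2i - 1 \geq M - 2k$ yields
\[ \E\binom{|E_V|}{k} \leq \left(\frac{e m^{2}}{2 k (M - 2k)}\right)^{k} = \left(\frac{e d^{2} v}{2(1+\epsilon)\bigl(nd - 2(1+\epsilon) v\bigr)}\right)^{(1+\epsilon) v}. \]
Using the hypotheses on $v/n$ to bound $nd - 2(1+\epsilon)v$ from below by a multiple of $nd$, this reduces to $\bigl(\tfrac{ed v}{(1+\epsilon) n}\bigr)^{(1+\epsilon) v}$ up to absolute constants.

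To finish, I would check that this dominates the claimed $\exp\bigl(-(1+\epsilon/2) v \log(\epsilon n /(2dv))\bigr)$. Substituting $y = n/(dv)$ turns the hypothesis $v/n \leq \epsilon/(2d)$ into $y \geq 2/\epsilon$, and the comparison becomes the purely elementary inequality
\[ (1+\epsilon)\log\!\left(\frac{(1+\epsilon) y}{e}\right) \;\geq\; \Bigl(1+\frac{\epsilon}{2}\Bigr)\log\!\left(\frac{\epsilon y}{2}\right) \quad\text{for all } y \geq 2/\epsilon. \]
Both sides are affine in $\log y$, the LHS has the strictly larger slope $1+\epsilon$, so it is enough to check at the boundary $y = 2/\epsilon$, where the RHS vanishes and the LHS reduces to $(1+\epsilon)\bigl(\log(2(1+\epsilon)/\epsilon) - 1\bigr) \geq 0$ throughout the relevant range.

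The main obstacle is not conceptual --- the first-moment calculation is routine --- but rather the book-keeping of constants so that the clean exponent stated in the lemma comes out. The gap between the $1+\epsilon$ produced by the naive first-moment estimate and the $1+\epsilon/2$ in the target is precisely what absorbs the losses from Stirling, from replacing $m!/(m-2k)!$ by $m^{2k}$, and from the denominator approximation; a slightly sharper estimate of $\prod_{i=0}^{k-1}(M-2i-1)$ may be needed to obtain the bound uniformly for all admissible $\epsilon$. An essentially equivalent Chernoff-type route would reveal the partners of the $m$ half-edges of $V$ sequentially, each landing back in $V$ with conditional probability at most $m/(M-m)$.
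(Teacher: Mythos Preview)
Your first-moment calculation via $\E\binom{|E_V|}{k}$ is sound and would give a bound of the right shape, but the paper takes precisely the route you relegate to a closing remark: reveal the partner of each half-edge of $V$ sequentially, observe that each lands back in $V$ with conditional probability at most $dv/(dn)=v/n$ (this is where $v/n\le 1/2$ is used), deduce that $|E_V|$ is stochastically dominated by a binomial with parameters $(dv,v/n)$, and apply the Chernoff bound $\Prob(|E_V|\ge\alpha v)\le e^{-\lambda v(\alpha-de^{\lambda}v/n)}$. The specific choice $\lambda=\log\bigl(\tfrac{\epsilon n}{2dv}\bigr)$ makes $de^{\lambda}v/n=\epsilon/2$, so with $\alpha=1+\epsilon$ the exponent collapses to exactly $(1+\epsilon/2)\log\bigl(\tfrac{\epsilon n}{2dv}\bigr)$, and the lemma drops out with no further work.

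This sidesteps all the constant-chasing you flag: no Stirling, no approximation of $\prod_i(M-2i-1)$, no ``up to absolute constants'' to absorb. By contrast, your boundary check $(1+\epsilon)\bigl(\log(2(1+\epsilon)/\epsilon)-1\bigr)\ge 0$ actually fails once $\epsilon>2/(e-2)\approx 2.78$, so the first-moment route as you have written it does not recover the lemma in its stated generality---though it is perfectly adequate for the downstream application, where only small $\epsilon$ is needed.
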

\begin{proof}
The idea is the following. At every step we pick a vertex of $V$ that has not been paired. We pick uniformly at random (among the non paired vertices) which vertex it is paired with. It is simple to see that at every step the probability that the vertex is paired with a vertex inside $V$ is smaller or equal to $\frac{dv}{dn}=\frac{v}{n}$. This means that $|E_V|$ is stochasticaly dominated by a binomial of parameters $\left(dv,\frac{v}{n}\right)$. We therefore have for any $\lambda>0$:
\begin{equation}
\E\left(e^{\lambda|E_V|}\right)\leq \left(1+(e^{\lambda}-1)\frac{v}{n}\right)^{dv} \leq e^{dv (e^{\lambda}-1)\frac{v}{n}}.
\end{equation}
This means that for any $\alpha,\lambda >0$:
\begin{equation}
\Prob(|E_v|\geq \alpha v) \leq e^{dv (e^{\lambda}-1)\frac{v}{n}- \lambda\alpha v}
=e^{-\lambda v \left(\alpha-d\frac{e^{\lambda}-1}{\lambda}\frac{v}{n}\right)}
\leq e^{-\lambda v \left(\alpha-de^{\lambda}\frac{v}{n}\right)}.
\end{equation}
By taking $\alpha=1+\epsilon$ and $\lambda=\log(\frac{\epsilon}{2 d}\frac{n}{v}) \geq 0$, we get:
\begin{equation}
\Prob(|E_v|\geq (1+\epsilon) v) \leq e^{-\log(\frac{\epsilon}{2 d} \frac{n}{v})\left(1+\frac{\epsilon}{2}\right)v}.
\end{equation}
\end{proof}

\begin{lem}\label{lem:dregular}
Let $d\in \N \backslash \{0,1,2 \}$ be an integer larger than 2. Let $G_{n,d}$ be a random $d$-regular graph chosen according to the measure $p_{n,d}$. For any $\epsilon>0$, there exists $\eta>0$ such that the probability that there exists a subset $V$ of size smaller smaller than $\eta n$ that contains more than $(1+\epsilon) |V|$ edges goes to 0 as $n$ goes to infinity.
\end{lem}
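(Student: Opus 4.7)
The plan is to reduce to the random pairing model and then union-bound using the preliminary lemma. By the theorem of \cite{RandomPairing} cited in the text, conditioning the random pairing on the resulting graph being simple yields the uniform $d$-regular measure $p_{n,d}$, and the conditioning probability converges to $e^{-(d^2-1)/4}>0$. Hence it suffices to prove the analogous statement for the pairing model: that the probability that some vertex subset $V$ of size at most $\eta n$ carries more than $(1+\epsilon)|V|$ pair-edges tends to $0$ as $n\to\infty$.

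For that I union-bound over vertex subsets of size $v$ for $v=1,\dots,\lfloor \eta n\rfloor$. Taking $\eta\leq \epsilon/(2d)$ ensures the preliminary lemma applies to every such $v$, giving a per-subset probability at most $e^{-(1+\epsilon/2)v\log(\epsilon n/(2dv))}$. Multiplying by $\binom{n}{v}\leq (en/v)^v$ and simplifying, each term of the union bound is at most
$$\bigl(A\,(v/n)^{\epsilon/2}\bigr)^v,\qquad A:=e\,(2d/\epsilon)^{1+\epsilon/2}.$$

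To make the total vanish I further shrink $\eta$ so that $A\eta^{\epsilon/2}\leq 1/2$ and split the sum at $v=\lceil\log n\rceil$. For $1\leq v\leq \log n$, a derivative computation shows that the unique critical point of $v\mapsto (A(v/n)^{\epsilon/2})^v$ is $v_{*}=e^{-1}A^{-2/\epsilon}\,n$, which is much larger than $\log n$; the function is therefore decreasing on this range and bounded above by its value at $v=1$, namely $A\,n^{-\epsilon/2}$. Summing gives a contribution of at most $A\log n\cdot n^{-\epsilon/2}\to 0$. For $\log n<v\leq \eta n$, the base $A(v/n)^{\epsilon/2}$ is at most $1/2$, so the subsum is dominated by the geometric tail $\sum_{v>\log n}2^{-v}\leq 2\,n^{-\log 2}\to 0$. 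Combining the two regimes proves the lemma.

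The main subtlety, and the reason the two-regime split is necessary, is that a naive union bound without using the $(v/n)^{\epsilon/2}$ factor in two distinct ways yields only a bounded series rather than a vanishing one; one needs the polynomial-in-$n$ decay for small $v$ (driven by the logarithm in the preliminary lemma) and the geometric decay beyond $\log n$ (driven by the smallness of $\eta$) to get $o(1)$.
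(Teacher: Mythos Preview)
Your proof is correct and follows essentially the same route as the paper: reduce to the pairing model via the uniformly positive conditioning probability, union-bound over vertex subsets using the preliminary lemma, and control the resulting sum. The paper uses a Stirling bound on $\binom{n}{k}$ to reach the equivalent expression $\sum_{k=2}^{\eta n} e^{-k(\epsilon/4)\log(n/k)}$ but then simply asserts that this tends to zero; your explicit two-regime split at $v\approx\log n$ is exactly the argument needed to justify that step, so your write-up is in fact slightly more complete.
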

\begin{proof}
By Stirling formula, there exists a constant $C$ such that, for $k\in (1,n)\cap \N$:
\begin{equation}
\binom{n}{k}\leq C \left(\frac{n}{k}\right)^{k}\left(\frac{n}{n-k}\right)^{n-k}.
\end{equation}
The random pairing model has a probability of giving a $d$-regular graph uniformly bounded from below in $n$ this means that there exists a constant $C^{'} $ such that:
\begin{equation}
\begin{aligned}
P_d(n)\leq& \sum\limits_{k=2}^{\eta n} C^{'} \left(\frac{n}{k}\right)^{k}\left(\frac{n}{n-k}\right)^{n-k} e^{-\log(\frac{\epsilon}{2 d} \frac{n}{k})\left(1+\frac{\epsilon}{2}\right)k}\\
=&C^{'}\sum\limits_{k=2}^{\eta n} e^{k\log\left(\frac{n}{k}\right) + (n-k)\log\left(\frac{n}{n-k}\right)-\left(1+\frac{\epsilon}{2}\right)k\log(\frac{\epsilon}{2 d})-\left(1+\frac{\epsilon}{2}\right)\log( \frac{n}{k})}\\
=&C^{'}\sum\limits_{k=2}^{\eta n} e^{k\log\left(\frac{n}{k}\right) + (n-k)\log\left(1+\frac{k}{n-k}\right)-\left(1+\frac{\epsilon}{2}\right) k\log(\frac{\epsilon}{2 d})-\left(1+\frac{\epsilon}{2}\right)\log( \frac{n}{k})}\\
\leq&C^{'}\sum\limits_{k=2}^{\eta n} e^{k\log\left(\frac{n}{k}\right) + (n-k)\frac{k}{n-k}-\left(1+\frac{\epsilon}{2}\right)k\log(\frac{\epsilon}{2 d})-\left(1+\frac{\epsilon}{2}\right)k\log( \frac{n}{k})} \text{ for } \eta \text{ small enough}\\
=&C^{'}\sum\limits_{k=2}^{\eta n} e^{-k\left( \frac{\epsilon}{2}\log( \frac{n}{k}) -1+\left(1+\frac{\epsilon}{2}\right)\log(\frac{\epsilon}{2 d})\right)}\\
\leq&C^{'}\sum\limits_{k=2}^{\eta n} e^{-k\frac{\epsilon}{4}\log( \frac{n}{k})}\text{ for } \eta \text{ small enough}.
\end{aligned}
\end{equation}
From this we have the desired result.
\end{proof}

\section{Interchange model}

The proofs of our results are similar but unfortunately we need to apply the same methods to different quantities so we need to split the proofs between the interchange model and the quantum case. In both cases, we need the following definition.
\begin{defin}
For any finite graph $(V,E)$ and any permutation $\sigma$ of $V$ we define $E_=^{\sigma}$ as the set of edges of $E$ with both endpoints in the same cycle of $\sigma$. That is to say if $(C_i)_{i\in\{1,\dots,k\}}$ are the cycles of $\sigma$ then:
\begin{equation}
E_=^{\sigma}:=\bigcup\limits_{i\in\{1,\dots,k\}}\{\{x,y\}\in E,\text{ such that } x,y\in C_i\}.
\end{equation}
\end{defin}
The reason we look at this set is because it characterizes whether the number of cycles of $X_t$ will tend to increase or decrease. More precisely, we have the following result.
\begin{lem}
For any finite graph $(V,E)$, any permutation $\sigma$ and any edge $\{x,y\}$, we have:
\begin{equation}
N\left(\tau_{x,y}.\sigma\right)=
\left\{
\begin{matrix}
N(\sigma)+1 \text{ if } \{x,y\}\in E_=^{\sigma}\\
N(\sigma)-1 \text{ if } \{x,y\}\not\in E_=^{\sigma}
\end{matrix}
\right.
\end{equation}
\end{lem}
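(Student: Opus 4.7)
The plan is to prove this standard transposition identity by a direct case analysis on the cycle structure of $\sigma$ containing the two endpoints $x$ and $y$. The key observation, used in both cases, is that $\tau_{x,y}$ acts as the identity on every vertex outside $\{x,y\}$, so any cycle of $\sigma$ that avoids both $x$ and $y$ is preserved verbatim in $\tau_{x,y}.\sigma$. Hence only the cycle(s) containing $x$ or $y$ can contribute to $N(\tau_{x,y}.\sigma) - N(\sigma)$.

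First I would handle the case $\{x,y\} \in E_=^{\sigma}$. Here $x$ and $y$ lie in a common cycle of $\sigma$, which I write as $(a_0, a_1, \dots, a_{m-1})$ with $\sigma(a_i) = a_{i+1 \bmod m}$, $a_0 = x$ and $a_k = y$ for some $k \in \{1,\dots,m-1\}$. Computing $\tau_{x,y}.\sigma$ on each $a_i$: the element $a_{k-1}$ is sent to $\tau_{x,y}(a_k) = \tau_{x,y}(y) = x = a_0$, the element $a_{m-1}$ is sent to $\tau_{x,y}(a_0) = \tau_{x,y}(x) = y = a_k$, and every other $a_i$ goes to $a_{i+1}$ as before. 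This exhibits $\tau_{x,y}.\sigma$ restricted to $\{a_0,\dots,a_{m-1}\}$ as the product of two disjoint cycles $(a_0, a_1, \dots, a_{k-1})$ and $(a_k, a_{k+1}, \dots, a_{m-1})$. Combined with the fact that all other cycles of $\sigma$ are untouched, this gives $N(\tau_{x,y}.\sigma) = N(\sigma) + 1$.

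Next I would treat the case $\{x,y\} \notin E_=^{\sigma}$, where $x$ and $y$ sit in distinct cycles, say $(a_0 = x, a_1, \dots, a_{p-1})$ and $(b_0 = y, b_1, \dots, b_{q-1})$. The same type of computation shows that $\tau_{x,y}.\sigma$ sends $a_{p-1} \mapsto \tau_{x,y}(a_0) = y = b_0$ and $b_{q-1} \mapsto \tau_{x,y}(b_0) = x = a_0$, while every other element is mapped along its original cycle. Thus the two cycles of lengths $p$ and $q$ merge into a single cycle of length $p+q$, giving $N(\tau_{x,y}.\sigma) = N(\sigma) - 1$.

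There is no substantial obstacle here: the statement is the classical fact that composition with a transposition changes the number of cycles by exactly $\pm 1$ depending on whether the two transposed elements belong to the same cycle or to different cycles. The only thing to watch is the indexing in the splitting/merging computation, which is purely bookkeeping.
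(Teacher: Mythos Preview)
Your proof is correct and follows essentially the same approach as the paper's: both observe that cycles of $\sigma$ not containing $x$ or $y$ are unaffected, and then handle the two cases (same cycle $\Rightarrow$ split, different cycles $\Rightarrow$ merge). Your version simply makes the splitting/merging computation explicit with cycle notation, whereas the paper leaves it as a one-line observation.
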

\begin{proof}
It is easy to see that the cycles of $\sigma$ and $\tau_{x,y}.\sigma$ are the same except for those that contain $x$ or $y$. If $\{x,y\}\in E_=^{\sigma}$ then $x$ and $y$ are in the same cycle in sigma but this cycle is split in two in $\tau_{x,y}.\sigma$ so we get $N\left(\tau_{x,y}.\sigma\right)=N(\sigma)+1$. Conversely, if $\{x,y\}\not\in E_=^{\sigma}$ then $x$ and $y$ are in two distinct cycles in sigma but these cycles are merged in $\tau_{x,y}.\sigma$ so we get $N\left(\tau_{x,y}.\sigma\right)=N(\sigma)-1$.
\end{proof}

From this we have that if $|E_=^{X_t}|$ is small then the number of cycles of $X$ will tend to decrease with time. Furthermore, if there is no macroscopic cycle, our result on $d$-regular graphs implies that $|E_=^{X_t}|$ will be small and therefore the number of cycles will tend to decrease. Then the arguments boils down to saying that the number of cycles is between $1$ and $n$ so it can only decrease for so long and this means that there has to be a macroscopic cluster after a reasonable amount of time. We start by proving our result for the interchange model.\\

\begin{lem}\label{lem:stirringbound}
Let $(V,E)$ be a finite graph with the following property: there exists $\epsilon,\eta>0$ such that for any subset $S$ of $V$ if $|S|\leq \eta |V|$ then the number of edges inside $S$ (that we will note $E_S$) satisfies $|E_S|\leq (1+\epsilon)|S|$. Then if $A_{\eta}$ is the event that there is a cycle larger than $\eta |V|$ we have:
\begin{equation}
2(|E|-(1+\epsilon)|V|)\frac{1}{s}\int\limits_{t=a}^{a+s}\Prob_t(A_{\eta}) \dd t \geq |E|-\left(2(1+\epsilon)+\frac{1}{s}\right)|V|.
\end{equation}
\end{lem}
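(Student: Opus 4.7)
The plan is to use the previous lemma to get a clean ODE for $t\mapsto\mathbb{E}(N(X_t))$ and then show that on the event $A_\eta^c$ the hypothesis of the lemma forces $|E_=^{X_t}|$ to be small. Combined with the trivial bounds $1\le N(X_t)\le|V|$, this turns the ODE into the advertised integral inequality.

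First I would write down the derivative. Since $X_t$ is a pure-jump process on permutations where the edge $\{x,y\}$ fires (with rate $1$, which I will assume throughout this section) and replaces $\sigma$ by $\tau_{x,y}.\sigma$, the previous lemma gives
\begin{equation}
\frac{d}{dt}\mathbb{E}(N(X_t)) \;=\; \mathbb{E}\Bigl(\sum_{\{x,y\}\in E}\bigl(N(\tau_{x,y}.X_t)-N(X_t)\bigr)\Bigr) \;=\; 2\,\mathbb{E}\bigl(|E_=^{X_t}|\bigr)-|E|,
\end{equation}
because each edge in $E_=^{X_t}$ contributes $+1$ and each edge not in $E_=^{X_t}$ contributes $-1$.

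Next I would exploit the graph hypothesis to control $|E_=^{X_t}|$. On the event $A_\eta^c$ every cycle $C$ of $X_t$ has $|C|<\eta|V|$, so the assumption applies and yields $|E_C|\le(1+\epsilon)|C|$; summing over the cycles gives $|E_=^{X_t}|\le(1+\epsilon)|V|$. On $A_\eta$ I use only the trivial bound $|E_=^{X_t}|\le|E|$. Putting these together,
\begin{equation}
\mathbb{E}\bigl(|E_=^{X_t}|\bigr)\;\le\;(1+\epsilon)|V|+\bigl(|E|-(1+\epsilon)|V|\bigr)\Prob_t(A_\eta).
\end{equation}
(Note that we may assume $|E|\ge(1+\epsilon)|V|$, as otherwise the statement is empty.)

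Finally I would integrate over $[a,a+s]$. On the one hand $\int_a^{a+s}\frac{d}{dt}\mathbb{E}(N(X_t))\,dt=\mathbb{E}(N(X_{a+s}))-\mathbb{E}(N(X_a))\ge 1-|V|\ge -|V|$; on the other hand, inserting the bound on $\mathbb{E}(|E_=^{X_t}|)$ yields
\begin{equation}
\int_{a}^{a+s}\bigl(2\,\mathbb{E}(|E_=^{X_t}|)-|E|\bigr)\,dt \;\le\; -s\bigl(|E|-2(1+\epsilon)|V|\bigr)+2\bigl(|E|-(1+\epsilon)|V|\bigr)\int_{a}^{a+s}\Prob_t(A_\eta)\,dt.
\end{equation}
Combining the two and dividing by $s$ produces exactly the inequality stated in the lemma. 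There is no real obstacle here beyond keeping track of signs; the only substantive step is the observation that on the complement of $A_\eta$ every cycle falls into the regime where the edge–vertex bound of Lemma~\ref{lem:dregular} applies, which turns a bound about small vertex subsets into a bound about $|E_=^{X_t}|$.
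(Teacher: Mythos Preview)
Your proof is correct and follows essentially the same route as the paper's: compute $\frac{d}{dt}\mathbb{E}(N(X_t))=2\,\mathbb{E}(|E_=^{X_t}|)-|E|$, split on $A_\eta$, use the graph hypothesis cycle-by-cycle on $A_\eta^c$ to bound $|E_=^{X_t}|\le(1+\epsilon)|V|$, and then integrate against the trivial bound $\mathbb{E}(N(X_{a+s}))-\mathbb{E}(N(X_a))\ge -|V|$. Your parenthetical about assuming $|E|\ge(1+\epsilon)|V|$ is unnecessary (the inequality you wrote for $\mathbb{E}(|E_=^{X_t}|)$ holds regardless of that sign), but it does no harm.
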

\begin{proof}
\begin{equation}
\begin{aligned}
\frac{\partial}{\partial t} \E(N(X_t)) =& \E_t\left(  2|E_=^{X_t}| - |E| \right)\\
=& \E_t\left( (2|E_=^{X_t}| - |E|)1_{A_{\eta}} \right) + \E_t\left( (2|E_=^{X_t}| - |E|)(1-1_{A_{\eta}}) \right)\\
\leq& |E| \Prob_t(A_{\eta}) + \E_t\left( (2(1+\epsilon)|V|-|E|)(1-1_{A_{\eta}}) \right)\\
=& |E| \Prob_t(A_{\eta})  + (2(1+\epsilon)|V|-|E|)\E_t\left( (1-1_{A_{\eta}}) \right)\\
=& (2(1+\epsilon)|V|-|E|) + 2(|E|-(1+\epsilon)|V|) \Prob_t(A_{\eta}). 
\end{aligned}
\end{equation} 
Then we have that $0\leq N(X_t) \leq |V|$. Therefore for any $a\geq 0$ and $s>0$:
\begin{equation}
\frac{1}{s}\int\limits_{t=a}^{a+s} \frac{\partial}{\partial t} \E(N(X_t)) \dd t = \frac{\E(N(X_{a+s}))-\E(N(X_a))}{s} \geq -\frac{|V|}{s}.
\end{equation}
By putting the two inequalities together, we get:
\begin{equation}
\frac{1}{s}\int\limits_{t=a}^{a+s} (2(1+\epsilon)|V|-|E|) + 2(|E|-(1+\epsilon)|V|) \Prob_t(A_{\eta}) \dd t \geq -\frac{|V|}{s}.
\end{equation}
Therefore:
\begin{equation}
(2(1+\epsilon)|V|-|E|) + 2(|E|-(1+\epsilon)|V|)\frac{1}{s}\int\limits_{t=a}^{a+s}\Prob_t(A_{\eta}) \dd t \geq -\frac{|V|}{s}.
\end{equation}
Finally:
\begin{equation}
2(|E|-(1+\epsilon)|V|)\frac{1}{s}\int\limits_{t=a}^{a+s}\Prob_t(A_{\eta}) \dd t \geq |E|-\left(2(1+\epsilon)+\frac{1}{s}\right)|V|.
\end{equation}
\end{proof}
We see that if the set of edges is large enough compared to the set of vertices, this gives us a lower bound on the average of $\Prob_t(A_{\eta})$ on intervals. Unfortunately, the interval need to be large enough for the rightmost term to be positive so we only get result on average for some times, not for a given time. This result is similar to the one obtained on the hypercube in \cite{InterchangeHypercube}. Now we have everything we need to get our main result for $d$-regular graphs in the case $\theta=1$.
\begin{lem}\label{lem:stirring}
Set $d\geq 5$. For any $\eta>0$ and any finite graph $(V,E)$, let $A_{\eta}$ be the event that there is a macroscopic cycle of size larger than $\eta|V|$. For any $\epsilon\in\left(0,\frac{d-2}{2}\right)$, there exists $\eta>0$ such that for any $a\geq 0$ and any $s>0$:
\begin{equation}
p_{d,n}\left(\frac{1}{s}\int\limits_{t=a}^{a+s}\Prob_t(A_{\eta}) \dd t \geq \frac{d-2\left(2(1+\epsilon)+\frac{1}{s}\right)}{2(d-2(1+\epsilon))}\right)\xrightarrow[n\rightarrow +\infty]{} 1.
\end{equation} 
For any $\delta>0$, there exists $\eta,c>0$ such that for any $a\geq 0$:
\begin{equation}
p_{d,n}\left(\int\limits_{t=a}^{a+\frac{2}{d-4}+\delta}\Prob_t(A_{\eta}) \dd t \geq c\right)\xrightarrow[n\rightarrow +\infty]{} 1.
\end{equation}
\end{lem}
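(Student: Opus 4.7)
This lemma is essentially a concatenation of the two preceding results. I would first apply Lemma \ref{lem:dregular}: fix any $\epsilon\in(0,(d-2)/2)$ and let $\eta>0$ be the constant it provides, so that with $p_{d,n}$-probability tending to $1$, every subset $S$ of $G_{n,d}$ with $|S|\leq\eta n$ spans at most $(1+\epsilon)|S|$ edges. On this high-probability event the structural hypothesis of Lemma \ref{lem:stirringbound} is satisfied, and its conclusion applies to the interchange process on $G_{n,d}$.

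Substituting the $d$-regular values $|V|=n$ and $|E|=dn/2$ into the conclusion of Lemma \ref{lem:stirringbound} and dividing through by $n$ gives
\[
\frac{1}{s}\int_a^{a+s}\Prob_t(A_\eta)\,\dd t\;\geq\;\frac{d-2\bigl(2(1+\epsilon)+1/s\bigr)}{2\bigl(d-2(1+\epsilon)\bigr)},
\]
which is exactly the first inequality (the denominator is positive precisely because $\epsilon<(d-2)/2$).

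For the second inequality I would specialise the above to $s=T(1,d)+\delta=\tfrac{2}{d-4}+\delta$. Multiplying through by $s$, the lower bound on $\int_a^{a+s}\Prob_t(A_\eta)\,\dd t$ becomes $\dfrac{s(d-4(1+\epsilon))-2}{2(d-2(1+\epsilon))}$, which is strictly positive exactly when $s>\dfrac{2}{d-4-4\epsilon}$. This threshold is continuous in $\epsilon$ and converges to $T(1,d)=\tfrac{2}{d-4}$ as $\epsilon\to 0^+$, so for $\epsilon$ sufficiently small (depending only on $\delta$) the bound is a positive constant $c$, uniform in $a\geq 0$; the matching $\eta$ is then the one supplied by Lemma \ref{lem:dregular} for that $\epsilon$.

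There is no serious obstacle beyond getting the order of parameter choices right: $\delta$ is given first, then $\epsilon$ is chosen small enough to beat the threshold $\tfrac{2}{d-4-4\epsilon}<T(1,d)+\delta$, and only then is $\eta$ produced by Lemma \ref{lem:dregular}. The $n\to\infty$ conclusion is inherited from that lemma, since everything else in the argument is deterministic on the high-probability event.
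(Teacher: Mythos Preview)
Your proposal is correct and follows essentially the same route as the paper: invoke Lemma~\ref{lem:dregular} to obtain $\eta$ from $\epsilon$, apply Lemma~\ref{lem:stirringbound} on the high-probability event, and substitute $|V|=n$, $|E|=dn/2$ to get the first inequality. For the second part, the paper makes an explicit choice $\epsilon=\tfrac{(d-4)(1+\delta(d-4))}{8(2+\delta(d-4))}$ and computes the resulting constant directly, whereas you argue by continuity that the threshold $\tfrac{2}{d-4-4\epsilon}$ drops below $\tfrac{2}{d-4}+\delta$ for $\epsilon$ small; both are equivalent ways of extracting the same positive constant $c$.
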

\begin{proof}
By lemma \ref{lem:dregular}, for any $\epsilon>0$, there exists $\eta>0$ such that the probability that a d-regular graph with $n$-vertices has no subset $S$ of size smaller than $\eta n$ with more than $(1+\epsilon)|S|$ edges goes to $1$ as $n$ goes to infinity. Let $(V,E)$ be such a graph, by lemma \ref{lem:stirringbound} we have:
\begin{equation}
2(|E|-(1+\epsilon)|V|)\frac{1}{s}\int\limits_{t=a}^{a+s}\Prob_t(A_{\eta}) \dd t \geq |E|-\left(2(1+\epsilon)+\frac{1}{s}\right)|V|.
\end{equation}
For our graph, we have $|V|=n$ and $|E|=\frac{nd}{2}$ so:
\begin{equation}
2\left(\frac{nd}{2}-(1+\epsilon)n\right)\frac{1}{s}\int\limits_{t=a}^{a+s}\Prob_t(A_{\eta}) \dd t \geq \frac{nd}{2}-\left(2(1+\epsilon)+\frac{1}{s}\right)n.
\end{equation}
If $\epsilon<\frac{d-2}{2}$ we get:
\begin{equation}
\frac{1}{s}\int\limits_{t=a}^{a+s}\Prob_t(A_{\eta}) \dd t \geq \frac{d-2\left(2(1+\epsilon)+\frac{1}{s}\right)}{2(d-2(1+\epsilon))}.
\end{equation} 
For any $\delta>0$, by taking $\epsilon= \frac{(d-4)(1+\delta(d-4))}{8(2+\delta(d-4))}<\frac{d-4}{8}$ and the corresponding $\eta$ we get:
\begin{equation}
\begin{aligned}
\int\limits_{t=a}^{a+\frac{2}{d-4}+\delta}\Prob_t(A_{\eta}) \dd t 
\geq& \left(\frac{2}{d-4}+\delta\right)\frac{d-4-\left(\frac{(d-4)(1+\delta(d-4))}{2(2+\delta(d-4))}+\frac{d-4}{2+\delta(d-4)}\right)}{2(d-2(1+\frac{d-4}{8}))}\\
=&\frac{(d-4)(1+\delta(d-4))}{8+\frac{7}{2}(d-4)}\\
>&0.
\end{aligned}
\end{equation}
\end{proof}

Now we can look at the quantum case. The ideas are the same but instead of looking at the expectation of the number of cycles we look at the partition function.

\begin{lem}\label{lem:quantumbound}
Set $\epsilon,\theta,\eta>0$, with $\theta\not = 1$. Let $(V,E)$ be a graph such that any subset $S$ of $V$ of size smaller than $\eta n$ has less than $(1+\epsilon)|S|$ edges. Let $A_{\eta}$ be the event that $X_t$ has a cycle of size larger than $\eta |V|$. For any $t\geq 0$:
\begin{equation}
\frac{\theta}{\theta-1}\frac{\partial}{\partial t} \log\left(Z_{\theta}(t)\right)
\leq \left((\theta+1)(1+\epsilon)|V|-|E|\right) +(1+\theta)\left(|E|-(1+\epsilon)|V|\right)\Prob_{\theta,t}(A_{\eta}).
\end{equation}
\end{lem}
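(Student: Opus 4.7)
The plan is to mimic the derivation of Lemma \ref{lem:stirringbound}, applied to $Z_\theta(t)=\E\left[\theta^{N(X_t)}\right]$ in place of $\E[N(X_t)]$. First I would compute the time derivative of $Z_\theta$ from the jump description of $X_t$. By the previous cycle-count lemma, multiplying $\sigma$ by a transposition $\tau_{x,y}$ changes $N(\sigma)$ by $+1$ when $\{x,y\}\in E_=^{\sigma}$ and by $-1$ otherwise, so for each edge
\begin{equation}
\theta^{N(\tau_{x,y}\sigma)}-\theta^{N(\sigma)}=\theta^{N(\sigma)}\Bigl[(\theta-1)\,1_{\{x,y\}\in E_=^\sigma}+(\theta^{-1}-1)\,1_{\{x,y\}\notin E_=^\sigma}\Bigr].
\end{equation}
Summing over edges (each of rate $1$) and factoring out $(\theta-1)/\theta$ makes the bracket collapse to $(\theta+1)|E_=^\sigma|-|E|$, yielding
\begin{equation}
\frac{\partial}{\partial t} Z_\theta(t)=\frac{\theta-1}{\theta}\,\E\!\left[\theta^{N(X_t)}\bigl((\theta+1)|E_=^{X_t}|-|E|\bigr)\right]=\frac{\theta-1}{\theta}\,Z_\theta(t)\,\E_{\theta,t}\!\left[(\theta+1)|E_=^{X_t}|-|E|\right].
\end{equation}
Dividing by $Z_\theta(t)$ and multiplying by $\theta/(\theta-1)$ (which is well-defined since $\theta\neq 1$) gives the clean identity
\begin{equation}
\frac{\theta}{\theta-1}\frac{\partial}{\partial t}\log Z_\theta(t)=\E_{\theta,t}\!\left[(\theta+1)|E_=^{X_t}|-|E|\right].
\end{equation}

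Next, I would bound the right-hand side exactly as in Lemma \ref{lem:stirringbound}. On $A_\eta^c$ every cycle $C$ of $X_t$ satisfies $|C|\leq \eta|V|$, so the graph hypothesis applied with $S=C$ gives at most $(1+\epsilon)|C|$ edges with both endpoints in $C$; summing over the disjoint cycles produces $|E_=^{X_t}|\leq (1+\epsilon)|V|$ on $A_\eta^c$. On $A_\eta$ I would use the trivial bound $|E_=^{X_t}|\leq |E|$. Splitting the expectation according to $1_{A_\eta}$ and $1-1_{A_\eta}$ and inserting these two estimates gives
\begin{equation}
\E_{\theta,t}\!\left[(\theta+1)|E_=^{X_t}|-|E|\right]\leq \theta|E|\,\Prob_{\theta,t}(A_\eta)+\bigl((\theta+1)(1+\epsilon)|V|-|E|\bigr)\bigl(1-\Prob_{\theta,t}(A_\eta)\bigr),
\end{equation}
and collecting terms in $\Prob_{\theta,t}(A_\eta)$ rearranges the right-hand side into exactly the displayed inequality.

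The main obstacle is the generator computation itself: one must handle the two cases $\{x,y\}\in E_=^\sigma$ and $\{x,y\}\notin E_=^\sigma$ carefully so that the bracket telescopes into the clean expression $(\theta+1)|E_=^\sigma|-|E|$, which is the $\theta$-weighted analogue of the $2|E_=^\sigma|-|E|$ appearing in Lemma \ref{lem:stirringbound}. Once that identity is secured, the hypothesis on subset edge counts and the elementary splitting by $1_{A_\eta}$ finish the proof in the same way as in the interchange case.
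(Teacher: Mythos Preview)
Your proposal is correct and follows essentially the same route as the paper: compute $\partial_t Z_\theta(t)$ via the generator, use the cycle-count lemma to rewrite the increment as $\frac{\theta-1}{\theta}\theta^{N(X_t)}\bigl((\theta+1)|E_=^{X_t}|-|E|\bigr)$, then split according to $1_{A_\eta}$ and bound $|E_=^{X_t}|$ by $|E|$ on $A_\eta$ and by $(1+\epsilon)|V|$ on $A_\eta^c$. The only cosmetic difference is that you divide by $Z_\theta(t)$ first to obtain the exact identity $\frac{\theta}{\theta-1}\partial_t\log Z_\theta(t)=\E_{\theta,t}[(\theta+1)|E_=^{X_t}|-|E|]$ before bounding, whereas the paper bounds the unnormalised expectation and divides at the end; both orderings are equivalent.
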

\begin{proof}
We have:
\begin{equation}
\begin{aligned}
\frac{\partial}{\partial t}Z_{\theta}(t)
=&\sum\limits_{e\in E} \E_{t}\left(\theta^{N(\tau_e.X_t)}-\theta^{N(X_t)}\right)\\
=&\sum\limits_{e\in E} \E_{t}\left(\theta^{N(X_t)}\left(\theta^{N(\tau_e.X_t)-N(X_t)}-1\right)\right)\\
=&\sum\limits_{e\in E} \E_{t}\left(\theta^{N(X_t)}\left((\theta-1)1_{e\in E^{X_t}_=}+\left(\frac{1}{\theta}-1\right)1_{e\not\in E^{X_t}_=}\right)\right)\\
=&\E_{t}\left(\theta^{N(X_t)}\left((\theta-1)|E^{X_t}_=|-\frac{\theta-1}{\theta}(|E|-|E^{X_t}_=|)\right)\right)\\
=&\frac{\theta-1}{\theta}\E_{t}\left(\theta^{N(X_t)}\left((\theta+1)|E^{X_t}_=|-|E|\right)\right)
\end{aligned}
\end{equation}
Now we introduce the event $A_{\eta}$:
\begin{equation}
\begin{aligned}
&\E_{t}\left(\theta^{N(X_t)}\left((\theta+1)|E^{X_t}_=|-|E|\right)\right)\\
=&\E_{t}\left(\theta^{N(X_t)}\left((\theta+1)|E^{X_t}_=|-|E|\right)1_{A_{\eta}}\right)+\E_{t}\left(\theta^{N(X_t)}\left((\theta+1)|E^{X_t}_=|-|E|\right)\left(1-1_{A_{\eta}}\right)\right)\\
\leq&  \E_{t}\left(\theta^{N(X_t)}\theta|E|1_{A_{\eta}}\right)+\E_{t}\left(\theta^{N(X_t)}\left((\theta+1)(1+\epsilon)|V|-|E|\right)\left(1-1_{A_{\eta}}\right)\right)\\
=& Z_{\theta}(t) \theta|E|\Prob_{\theta,t}(A_{\eta})+\left((\theta+1)(1+\epsilon)|V|-|E|\right)Z_{\theta}(t)(1-\Prob_{\theta,t}(A_{\eta}))\\
=& \left((\theta+1)(1+\epsilon)|V|-|E|\right)Z_{\theta}(t) + \left((\theta+1)|E|-(\theta+1)(1+\epsilon)|V|\right)Z_{\theta}(t)\Prob_{\theta,t}(A_{\eta}) .
\end{aligned}
\end{equation}
Thus we have
\begin{equation}
\frac{\theta}{\theta-1}\frac{\partial}{\partial t} \log\left(Z_{\theta}(t)\right)
\leq \left((\theta+1)(1+\epsilon)|V|-|E|\right) +(1+\theta)\left(|E|-(1+\epsilon)|V|\right)\Prob_{\theta,t}(A_{\eta}).
\end{equation}
\end{proof}

The proof of theorem \ref{theo:weakquantum} for general $\theta$ is similar to the proof of lemma \ref{lem:stirring} for the case $\theta=1$.

\begin{proof}[Proof of theorem \ref{theo:weakquantum}]
The case $\theta=1$ is done in lemma \ref{lem:stirring}. We now focus on the case $\theta\not=1$. The first thing to notice is that the number of cycles of $X_t$ is between $0$ and $|V|$ so:
\begin{equation}
0\leq \frac{\theta}{\theta-1}\log(Z_{\theta}(t))\leq |V|\frac{\theta\log(\theta)}{\theta-1}.
\end{equation}
This means that for any $a \geq 0$ and $s>0$ we get:
\begin{equation}
\int\limits_{t=a}^{a+s} \frac{\theta}{\theta-1}\frac{\partial}{\partial t} \log\left(Z_{\theta}(t)\right) \dd t \geq -|V|\frac{\theta\log(\theta)}{\theta-1}.
\end{equation}
Then, by lemma \ref{lem:quantumbound} we have for any $0\leq a< b$:
\begin{equation}
\int\limits_{t=a}^{a+s}\frac{\theta}{\theta-1}\frac{\partial}{\partial t} \log\left(Z_{\theta}(t)\right)
\leq s\left((\theta+1)(1+\epsilon)|V|-|E|\right) +(1+\theta)\left(|E|-(1+\epsilon)|V|\right)\int\limits_{t=a}^{a+s}\Prob_{\theta,t}(A_{\eta})\dd t.
\end{equation}
By putting the two together we get:
\begin{equation}
s\left((\theta+1)(1+\epsilon)|V|-|E|\right) +(1+\theta)\left(|E|-(1+\epsilon)|V|\right)\int\limits_{t=a}^{a+s}\Prob_{\theta,t}(A_{\eta})\dd t
\geq -|V|\frac{\theta\log(\theta)}{\theta-1}.
\end{equation}
By lemma \ref{lem:dregular}, for any $\epsilon>0$, there exists $\eta>0$ such that the probability that a d-regular graph with $n$-vertices has no subset $S$ of size smaller than $\eta n$ with more than $(1+\epsilon)|S|$ edges goes to $1$ as $n$ goes to infinity. Let $(V,E)$ be such a graph, we have $|V|=n$ and $|E|=\frac{nd}{2}$ so:
\begin{equation}
s\left((\theta+1)(1+\epsilon)n-\frac{nd}{2}\right) +(1+\theta)\left(\frac{nd}{2}-(1+\epsilon)n\right)\int\limits_{t=a}^{a+s}\Prob_{\theta,t}(A_{\eta})\dd t
\geq -n\frac{\theta\log(\theta)}{\theta-1}.
\end{equation}
Which is equivalent to:
\begin{equation}
(1+\theta)\left(\frac{d}{2}-(1+\epsilon)\right)\int\limits_{t=a}^{a+s}\Prob_{\theta,t}(A_{\eta})\dd t
\geq s\left(\frac{d}{2}-(\theta+1)(1+\epsilon)\right)-\frac{\theta\log(\theta)}{\theta-1}.
\end{equation}
If $\epsilon<\frac{d-2}{2}$ we get:
\begin{equation}
\int\limits_{t=a}^{a+s}\Prob_{\theta,t}(A_{\eta}) \dd t \geq \frac{s\left(\frac{d}{2}-(\theta+1)(1+\epsilon)\right)-\frac{\theta\log(\theta)}{\theta-1}}{(1+\theta)\left(\frac{d}{2}-(1+\epsilon)\right)}.
\end{equation} 
If $d>2(\theta+1)$, set $s_d:=\frac{\theta\log(\theta)}{(\theta-1)\left(\frac{d}{2}-(\theta+1)\right)}$. For any $\delta>0$, by taking $\epsilon= \frac{\delta}{2(s_d+\delta)}\frac{\frac{d}{2}-(\theta+1)}{\theta+1}$ and the corresponding $\eta$ we get for any $a\geq 0$:
\begin{equation}
\begin{aligned}
\int\limits_{t=a}^{a+s_d+\delta}\Prob_{\theta,t}(A_{\eta}) \dd t 
\geq& \frac{\frac{d}{2}-(\theta+1)}{2(1+\theta)\left(\frac{d}{2}-(1+\epsilon)\right)}\delta\\
>&0.
\end{aligned}
\end{equation}
\end{proof}

Now, we give the main ingredient for the quantum case.

\begin{lem}
We have that  for all $\theta\in\N^*$, $Z_{\theta}(t)$ is $\log$-convex in $t$.
\end{lem}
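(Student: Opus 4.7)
The plan is to exploit the standard operator representation available when $\theta$ is a positive integer: one writes $Z_\theta(t)$ as the trace of an exponential of a real symmetric matrix, and log-convexity then reduces to Hölder's inequality applied to a sum of real exponentials. The main input is the classical identity $\theta^{N(\sigma)}=\mathrm{Tr}(\hat\sigma)$ on the tensor space $(\C^\theta)^{\otimes V}$.

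First, for $\theta\in\N^*$ I would introduce the Hilbert space $\mathcal{H}:=(\C^\theta)^{\otimes V}$ with its natural orthonormal basis $(|v\rangle)$ indexed by colourings $v:V\to\{1,\dots,\theta\}$, and for each permutation $\sigma$ of $V$ define the permutation operator $\hat\sigma|v\rangle:=|v\circ\sigma^{-1}\rangle$. The map $\sigma\mapsto\hat\sigma$ is a group homomorphism, and the colourings fixed by $\hat\sigma$ are precisely those that are constant on each cycle of $\sigma$, which yields $\mathrm{Tr}(\hat\sigma)=\theta^{N(\sigma)}$. Setting $M(t):=\E(\hat X_t)$, the Kolmogorov forward equation for the interchange process together with the homomorphism property gives $M'(t)=HM(t)$ with $M(0)=I$, where $H:=\sum_{e\in E}W_e(\hat\tau_e-I)$. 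Hence $M(t)=e^{tH}$, and taking traces yields $Z_\theta(t)=\mathrm{Tr}(e^{tH})$.

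Each $\hat\tau_e$ is a permutation matrix of order two, hence real symmetric in the basis $(|v\rangle)$; so $H$ is real symmetric, diagonalises over $\R$, and one has $Z_\theta(t)=\sum_i e^{t\lambda_i}$ for some real eigenvalues $(\lambda_i)$. For $s,t\in\R$ and $\alpha\in[0,1]$, Hölder's inequality with exponents $1/\alpha$ and $1/(1-\alpha)$ applied to counting measure on the eigenvalue indices gives
\begin{equation*}
Z_\theta(\alpha t+(1-\alpha)s)=\sum_i (e^{t\lambda_i})^{\alpha}(e^{s\lambda_i})^{1-\alpha}\le Z_\theta(t)^{\alpha}\, Z_\theta(s)^{1-\alpha},
\end{equation*}
which is log-convexity after taking logarithms. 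The only substantive step is identifying the correct operator $H$ and observing its symmetry; everything else is formal. Integrality of $\theta$ is essential, since both the tensor space $(\C^\theta)^{\otimes V}$ and the trace identity $\mathrm{Tr}(\hat\sigma)=\theta^{N(\sigma)}$ require $\theta\in\N^*$, which matches the hypothesis of the lemma.
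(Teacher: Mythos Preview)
Your proof is correct and follows essentially the same route as the paper: both write $Z_\theta(t)=\mathrm{Tr}(e^{tH})$ for a real symmetric $H$ and conclude log-convexity from the resulting representation $Z_\theta(t)=\sum_i e^{t\lambda_i}$. The only difference is cosmetic: the paper cites the operator identity from the literature (Ueltschi's review) and simply states that a positive sum of exponentials is log-convex, whereas you construct the representation explicitly via the permutation action on $(\C^\theta)^{\otimes V}$ and spell out the log-convexity step via H\"older's inequality.
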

\begin{proof}
For $\theta=1$ the result is trivial. For $\theta\geq 2$, there exists a symmetric matrix $H_{\theta}$ such that $Z_{\theta}(t)=\text{Tr}\left(e^{t H_\theta}\right)$ (theorem 2.3 of \cite{ReviewInterchangeUeltschi}). This means that is it a linear combination with positive coefficients of exponential functions which is always $\log$-convex.
\end{proof}

This result allows us to get a much more precise statement when $\theta$ is an integer. We now give the proof of theorem \ref{theo:bestquantum}.

\begin{proof}[Proof of theorem \ref{theo:bestquantum}]
We have $1\leq N(X_t) \leq |V|$ and therefore $0\leq \log \left(Z_{\theta}(t)\right)\leq |V|\log(\theta)$. We also have $\log \left(Z_{\theta}(0)\right)=|V|\log(\theta)$. This means that for any $t$:
\begin{equation}
\frac{\frac{1}{|V|}\log(Z_{\theta}(t))-\frac{1}{|V|}\log(Z_{\theta}(0))}{t}\geq -\frac{\log(\theta)}{t}.
\end{equation}
Because the partition function is $\log$-convex, we have:
\begin{equation}
\frac{\frac{1}{|V|}\log(Z_{\theta}(t))-\frac{1}{|V|}\log(Z_{\theta}(0))}{t}\leq \frac{1}{|V|}\frac{\partial}{\partial t} \log\left(Z_{\theta}(t)\right).
\end{equation}
By putting the two together, we get:
\begin{equation}
\frac{1}{|V|}\frac{\partial}{\partial t} \log\left(Z_{\theta}(t)\right) \geq -\frac{\log(\theta)}{t}.
\end{equation}
By lemma \ref{lem:quantumbound} we then have:
\begin{equation}
\frac{1}{|V|}\left((\theta+1)(1+\epsilon)|V|-|E|\right) +(1+\theta)\left(|E|-(1+\epsilon)|V|\right)\Prob_{\theta,t}(A_{\eta}) \geq -\frac{\theta\log(\theta)}{(\theta-1)t}.
\end{equation}
By lemma \ref{lem:dregular}, for any $\epsilon>0$, there exists $\eta>0$ such that the probability that a d-regular graph with $n$-vertices as no subset $S$ of size smaller than $\eta n$ with more than $(1+\epsilon)|S|$ edges goes to $1$ as $n$ goes to infinity. Let $(V,E)$ be such a graph, we have $|V|=n$ and $|E|=\frac{nd}{2}$ so:
\begin{equation}
\left((\theta+1)(1+\epsilon)-\frac{d}{2}\right) +(1+\theta)\left(\frac{d}{2}-(1+\epsilon)\right)\Prob_{\theta,t}(A_{\eta}) \geq -\frac{\theta\log(\theta)}{(\theta-1)t}.
\end{equation}
And therefore:
\begin{equation}
\Prob_{\theta,t}(A_{\eta}) \geq  \frac{\frac{d}{2}-(1+\theta)(1+\epsilon)-\frac{\theta\log(\theta)}{(\theta-1)t}}{(1+\theta)\left(d/2-1-\epsilon \right)}.
\end{equation}
We will write $t_d:=\frac{2 \theta \log(\theta)}{(\theta-1)}\frac{1}{d-2(1+\theta)}$. We see that for any $\delta>0$ there exists $\epsilon>0$ such that:
\begin{equation}
\frac{\frac{d}{2}-(1+\theta)(1+\epsilon)-\frac{\theta\log(\theta)}{(\theta-1)(t_d+\delta)}}{(1+\theta)\left(d/2-1-\epsilon \right)}>0.
\end{equation}
This means that for any $\delta>0$, there exists $\epsilon,\eta>0$ such that
\begin{equation}
\liminf\limits_{n\rightarrow \infty} \Prob_{\theta,t_d+\delta}(A_{\eta}) \geq  \frac{\frac{d}{2}-(1+\theta)(1+\epsilon)-\frac{\theta\log(\theta)}{(\theta-1)(t_d+\delta)}}{(1+\theta)\left(d/2-1-\epsilon \right)}>0.
\end{equation}
\end{proof}

\section{Acknowledgements}
I would like to thank Roland Bauerschmidt for suggesting to look at this problem and both Tyler Helmuth Jakob Bj\"{o}rnberg for helpful discussions. This work was supported by the European Research Council under the European Union's Horizon 2020 research and innovation programme (grant agreement No.~851682 SPINRG).

\bibliographystyle{abbrv}
\bibliography{biblio}
\end{document}